\documentclass[oneside,reqno,english]{amsart}
\usepackage[T1]{fontenc}
\usepackage[utf8]{inputenc}
\usepackage{xcolor}
\usepackage{babel}
\usepackage{prettyref}
\usepackage{amsthm}
\usepackage{amssymb}
\usepackage[pdfusetitle,
 bookmarks=true,bookmarksnumbered=false,bookmarksopen=false,
 breaklinks=false,pdfborder={0 0 0},pdfborderstyle={},backref=false,colorlinks=false]
 {hyperref}
\hypersetup{
 colorlinks=true,citecolor=blue,linkcolor=blue,linktocpage=true}

\makeatletter
\numberwithin{equation}{section}
\numberwithin{figure}{section}

\usepackage{prettyref}

\newrefformat{cor}{Corollary~\ref{#1}}
\newrefformat{subsec}{Section~\ref{#1}}
\newrefformat{lem}{Lemma~\ref{#1}}
\newrefformat{thm}{Theorem~\ref{#1}}
\newrefformat{sec}{Section~\ref{#1}}
\newrefformat{chap}{Chapter~\ref{#1}}
\newrefformat{prop}{Proposition~\ref{#1}}
\newrefformat{exa}{Example~\ref{#1}}
\newrefformat{tab}{Table~\ref{#1}}
\newrefformat{rem}{Remark~\ref{#1}}
\newrefformat{def}{Definition~\ref{#1}}
\newrefformat{fig}{Figure~\ref{#1}}
\newrefformat{claim}{Claim~\ref{#1}}
\newrefformat{assu}{Assumption~\ref{#1}}

\makeatother

\theoremstyle{plain}
\newtheorem{thm}{\protect\theoremname}[section]
\newtheorem{prop}[thm]{\protect\propositionname}
\newtheorem{cor}[thm]{\protect\corollaryname}
\providecommand{\corollaryname}{Corollary}
\providecommand{\propositionname}{Proposition}
\providecommand{\theoremname}{Theorem}

\begin{document}
\subjclass[2020]{Primary 30C80; Secondary 30H20, 32F45, 46E22, 47B32}
\title{A finite-Point Schwarz-Pick Inequality}
\begin{abstract}
We prove a finite-point version of the Schwarz-Pick inequality for
holomorphic self-maps of the disk. The estimate compares Bergman Gram
matrices built from several points and their images, and the sharp
constant depends on both the number of points and, for finite Blaschke
products, the degree of the map. These matrices also arise from the
Szegő-kernel metric on the symmetrized polydisc, where the estimate
becomes a sharp Schwarz-Pick inequality for the induced holomorphic
maps.
\end{abstract}

\author{James Tian}
\address{Mathematical Reviews, 535 W. William St, Suite 210, Ann Arbor, MI
48103, USA}
\email{james.ftian@gmail.com}
\keywords{Schwarz-Pick inequality, Bergman space, reproducing kernels, model
spaces, finite Blaschke products, Loewner order, multipoint interpolation.}

\maketitle
\tableofcontents{}

\section{Introduction}\label{sec:1}

The Schwarz-Pick lemma is usually read one point at a time. If $f\colon\mathbb{D}\longrightarrow\mathbb{D}$
is holomorphic, then 
\begin{equation}
\frac{\left|f'(z)\right|}{1-\left|f(z)\right|^{2}}\leq\frac{1}{1-\left|z\right|^{2}}.\label{eq:1-1}
\end{equation}
Geometrically, this says that a holomorphic self-map of the disk contracts
the Poincaré metric. If several points $z_{1},\ldots,z_{n}$ are given,
one can of course apply the inequality at each point separately. That
gives $n$ scalar inequalities, but it does not say how the derivatives
at the different points fit together.

There is a well developed multipoint form of Schwarz-Pick theory.
One approach uses hyperbolic difference quotients and their iterates.
It leads naturally to the Schur algorithm, Nevanlinna-Pick interpolation,
and refinements in which information at several points is fed successively
into the estimate \cite{MR2072742,MR2572646,MR3058518}. Schwarz-Pick
inequalities have also been studied through reproducing kernel spaces
and operator theory \cite{MR2304058,MR2419482,MR4760925}. The latter
point of view has recently been used to connect multipoint inequalities
with model spaces and invariant derivatives \cite{MR4760925}.

We take a different finite-point view. We keep the ordinary derivatives
\[
f'\left(z_{1}\right),\ldots,f'\left(z_{n}\right)
\]
and ask for a single inequality that sees them all at once. The Bergman
kernel gives a natural way to do this. For $Z=\left(z_{1},\ldots,z_{n}\right)$,
let $G_{Z}$ and $H_{f,Z}$ be the matrices defined in \prettyref{eq:2-1}
and \prettyref{eq:2-2}. Our main result is 
\[
H_{f,Z}\leq nG_{Z}.
\]
Here and throughout, the inequality is in the Loewner order on Hermitian
matrices. ($A\leq B$ means that $B-A$ is positive semidefinite.)
The constant $n$ is sharp.

The matrix order matters. The content is not in the diagonal estimates,
but in the way the off-diagonal entries couple the different points.
The full inequality controls every linear combination of the corresponding
Bergman kernels, and therefore contains information that is not obtained
by applying the scalar Schwarz-Pick lemma $n$ times.

There is a second feature when the map is a finite Blaschke product.
If $B$ has degree $d$, then the estimate improves to 
\[
H_{B,Z}\leq\min\left\{ n,d\right\} G_{Z},
\]
and this constant is again sharp. 

The proof is built from a few Hilbert space ideas. For a finite Blaschke
product $B$, the Bergman kernel factors through the model space 
\[
K_{B}=H^{2}\ominus BH^{2}.
\]
The canonical conjugation on $K_{B}$ recovers $B'$ by a tensor contraction.
For the chosen points, only the space 
\[
span\left\{ q^{B}_{z_{1}},\ldots,q^{B}_{z_{n}}\right\} 
\]
is involved. Its dimension is bounded by both $n$ and $\dim K_{B}=d$,
and the desired constant comes from this simple dimension count. The
general case then follows by approximation with finite Blaschke products.

For one point the same argument reduces to Cauchy-Schwarz and gives
the classical Schwarz-Pick lemma. In this sense the finite-point inequality
is the same Hilbert space approach after several evaluation vectors
are allowed to interact.

There is also an operator-theoretic way to read the estimate. Suppose
for simplicity that $z_{1},\ldots,z_{n}$ are distinct. On the span
of the Bergman kernels at the chosen points, consider the map 
\[
T_{f,Z}k_{z_{i}}=\overline{f'\left(z_{i}\right)}k_{f\left(z_{i}\right)}.
\]
Then \prettyref{eq:2-3} says that 
\[
\left\Vert T_{f,Z}\right\Vert \leq\sqrt{n},
\]
and for a finite Blaschke product of degree $d$ the bound improves
to $\sqrt{\min\left\{ n,d\right\} }$. So this gives a sharp finite-dimensional
estimate for the derivative weighted composition map determined by
the chosen points.

The inequality also gives a necessary condition for interpolation
with prescribed values and first derivatives. If $f\left(z_{i}\right)=w_{i}$
and $f'\left(z_{i}\right)=a_{i}$ for a holomorphic self-map of the
disk, then 
\[
\left[\frac{a_{i}\overline{a_{j}}}{\left(1-w_{i}\overline{w_{j}}\right)^{2}}\right]^{n}_{i,j=1}\leq n\left[\frac{1}{\left(1-z_{i}\overline{z_{j}}\right)^{2}}\right]^{n}_{i,j=1}.
\]
This does not replace the usual Pick interpolation criteria, but it
gives a direct simultaneous restriction on the first-order data \cite{MR208383,MR2572646,MR3058518}.

Lastly, the finite-point estimate also has a geometric interpretation.
In \prettyref{sec:3} we show that the matrices above arise from the
Szegő kernel metric on the symmetrized polydisc. The estimate then
becomes a Schwarz-Pick inequality for the self-maps induced by holomorphic
maps of the disk. For proper self-maps, the classification by finite
Blaschke products gives a sharp bound in terms of the degree of the
map.

\section{The finite-point estimate}\label{sec:2}

We begin directly with the finite-point question. Let $A^{2}$ denote
the Bergman space on $\mathbb{D}$, with reproducing kernel 
\begin{equation}
k_{z}(w)=\frac{1}{\left(1-\overline{z}w\right)^{2}}.\label{eq:2-1a}
\end{equation}
For a holomorphic self-map $f$ of the disk and points $z_{1},\ldots,z_{n}\in\mathbb{D}$,
consider the two matrices 
\begin{equation}
G_{Z}=\left[\frac{1}{\left(1-z_{i}\overline{z_{j}}\right)^{2}}\right]^{n}_{i,j=1}\label{eq:2-1}
\end{equation}
and 
\begin{equation}
H_{f,Z}=\left[\frac{f'\left(z_{i}\right)\overline{f'\left(z_{j}\right)}}{\left(1-f\left(z_{i}\right)\overline{f\left(z_{j}\right)}\right)^{2}}\right]^{n}_{i,j=1}.\label{eq:2-2}
\end{equation}
The first is the Gram matrix of the Bergman kernels at the points
$z_{1},\ldots,z_{n}$. When $n=1$, the inequality $H_{f,Z}\leq G_{Z}$
is the usual Schwarz-Pick lemma \prettyref{eq:1-1}. For several points
there is a different bound.
\begin{thm}
\label{thm:2-1} Let $f$ be a holomorphic self-map of $\mathbb{D}$
and let $z_{1},\ldots,z_{n}\in\mathbb{D}$. Then 
\begin{equation}
H_{f,Z}\leq nG_{Z}.\label{eq:2-3}
\end{equation}
If $f=B$ is a finite Blaschke product of degree $d$, then 
\begin{equation}
H_{B,Z}\leq\min\left\{ n,d\right\} G_{Z}.\label{eq:2-4}
\end{equation}
Both constants are sharp.
\end{thm}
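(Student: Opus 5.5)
The plan is to work first with a finite Blaschke product $B$ of degree $d$, prove \prettyref{eq:2-4}, and then deduce \prettyref{eq:2-3} by approximation.

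\textbf{Setup in the model space.} Let $q^{B}_{w}(z)=\frac{1-\overline{B(w)}B(z)}{1-\overline{w}z}$ be the reproducing kernel of $K_{B}$, which has dimension $d$. Let $C$ be the canonical conjugation, so that $(Cq^{B}_{w})(z)=\frac{B(z)-B(w)}{z-w}$. Two identities drive everything:
\[
1-B(z_{i})\overline{B(z_{j})}=(1-z_{i}\overline{z_{j}})\langle q^{B}_{z_{j}},q^{B}_{z_{i}}\rangle ,\qquad B'(z)=\sum_{k}e_{k}(z)\,(Ce_{k})(z),
\]
where $(e_{k})$ is any orthonormal basis of $K_{B}$. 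The second identity is the tensor contraction: the sum is basis independent and equals $(Cq^{B}_{z})(z)$. Using these, I would rewrite $H_{B,Z}$ as a Schur product of $G_{Z}$ with a matrix built from evaluations of the $e_{k}$ and $Ce_{k}$ at the $z_{i}$.

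\textbf{Goal and the dimension count.} The aim is to exhibit
\[
\langle H_{B,Z}c,c\rangle=\Bigl\Vert\sum_{i}c_{i}\,(\text{a vector in }K_{B}\otimes A^{2}\text{ built from }q^{B}_{z_{i}}\text{ and }k_{z_{i}})\Bigr\Vert^{2},
\]
in which the contraction against $C$ passes through the orthogonal projection $P$ onto $\mathrm{span}\{q^{B}_{z_{1}},\dots,q^{B}_{z_{n}}\}$. Cauchy--Schwarz in the Hilbert--Schmidt pairing then gives a factor $\Vert P\Vert_{HS}^{2}=\operatorname{rank}P\le\min\{n,d\}$, and this yields $H_{B,Z}\le\min\{n,d\}G_{Z}$. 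For $n=1$ the projection has rank one, and the argument collapses to the classical Cauchy--Schwarz proof of \prettyref{eq:1-1}.

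\textbf{Cross-check for the degree bound.} There is an independent route to the bound by $d$. Set $W_{B}g=(g\circ B)B'$ on $A^{2}$. Since $B$ is a $d$-to-$1$ branched cover of $\mathbb{D}$, the change of variables gives $\Vert W_{B}g\Vert^{2}=d\Vert g\Vert^{2}$. Its adjoint satisfies $W_{B}^{*}k_{z}=\overline{B'(z)}\,k_{B(z)}$, so $H_{B,Z}$ is the Gram matrix of the vectors $W_{B}^{*}k_{z_{i}}$. Because $\Vert W_{B}^{*}\Vert^{2}=d$, this gives $H_{B,Z}\le dG_{Z}$ directly. It does not, however, give the bound by $n$ when $n<d$; the projection argument is needed for that case.

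\textbf{General $f$.} Let $f$ be an arbitrary holomorphic self-map of $\mathbb{D}$. By Carathéodory's theorem, $f$ is a locally uniform limit of finite Blaschke products $B_{m}$, and then $B_{m}'\to f'$ locally uniformly as well. The entries of $H_{B_{m},Z}$ therefore converge to those of $H_{f,Z}$. Each $B_{m}$ satisfies $H_{B_{m},Z}\le nG_{Z}$, and the Loewner order is closed, so \prettyref{eq:2-3} follows. If $f$ is a unimodular constant there is nothing to prove.

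\textbf{Sharpness.} I would use equality in the Cauchy--Schwarz step. Take $B(z)=z^{N}$ with $N\ge n$ and let the points $z_{i}$ cluster at $0$, or sit at suitably rotated positions. Then test $H$ against $c$ chosen to align the vectors so that the ratio $\langle Hc,c\rangle/\langle Gc,c\rangle$ tends to $\min\{n,N\}$. Arranging this is the hardest part.

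\textbf{Main obstacle.} The step I expect to be delicate is turning the entrywise identity, in which $1/(1-B_{i}\overline{B_{j}})^{2}$ appears as a Schur quotient, into an honest norm identity in $K_{B}\otimes A^{2}$. The factorization must be chosen so that the contraction really factors through $P$ and the rank bound applies. The first thing I would try is to expand $k_{B(z)}$ and $k_{z}$ via $q^{B}_{z}\otimes q^{B}_{z}$ against $Cq^{B}_{z}$.
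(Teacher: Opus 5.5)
Your ingredients are the paper's: the model space $K_B$, the contraction $B'(z)=\sum_k e_k(z)(Ce_k)(z)$, and a rank count through $R_Z=\mathrm{span}\{q^B_{z_1},\dots,q^B_{z_n}\}$. But the step that carries the theorem is stated as a goal and then left as the ``main obstacle,'' so the bound by $n$ is never proved.

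It helps to aim at the right object. $H_{B,Z}$ is already the Gram matrix of the vectors $\overline{B'(z_i)}\,k_{B(z_i)}$ in $A^2$; what must be realized isometrically in a tensor space is $G_Z$. The missing idea is to square your first identity, which gives $\langle k_z,k_w\rangle=\langle q^B_z,q^B_w\rangle^2\langle k_{B(z)},k_{B(w)}\rangle$. Hence $k_z\mapsto q^B_z\otimes q^B_z\otimes k_{B(z)}$ extends to an isometry $\Gamma\colon A^2\to K_B\otimes K_B\otimes A^2$. You need two copies of $K_B$, not the single one in your $K_B\otimes A^2$, because the contraction producing $B'(z)$ is quadratic in $q^B_z$.

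For $u=\sum_i c_ik_{z_i}$ the vector $\Gamma u$ lies in $R_Z\otimes K_B\otimes A^2$. Contracting its first two factors against $\Omega=\sum_j e_j\otimes Ce_j$ gives $\sum_i c_i\overline{B'(z_i)}\,k_{B(z_i)}$. On $R_Z\otimes K_B$ this contraction is represented by $\sum_{j\le r}e_j\otimes Ce_j$, with $e_1,\dots,e_r$ an orthonormal basis of $R_Z$, and this vector has norm $\sqrt r$. That is the Hilbert--Schmidt Cauchy--Schwarz you had in mind, and it yields $c^*H_{B,Z}c\le r\,c^*G_Zc$ with $r=\dim R_Z\le\min\{n,d\}$.

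As it stands, the only bound you actually prove is $H_{B,Z}\le dG_Z$ via $W_Bg=(g\circ B)B'$. That argument is correct and gives the degree bound very directly. It cannot feed your approximation step, though, since Blaschke products $B_m\to f$ have unbounded degree. So \eqref{eq:2-3} is not established either.

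Sharpness is also left open, and clustering at $0$ fails in most of the range you propose. For $B=z^N$, the best ratio $\sup_c c^*H_{B,Z}c/c^*G_Zc$ equals $\Vert P_UW_B\Vert^2$ with $U=\mathrm{span}\{k_{z_i}\}$. As the points tend to $0$, $U$ tends to the polynomials of degree $<n$, while the range of $W_B$ contains only Taylor exponents congruent to $N-1$ modulo $N$. So the ratio tends to $0$ when $N>n$, and tends to $n$ only when $N=n$.

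The paper's example instead puts the points in one fiber near the circle: $z_j=r\omega_j$ with distinct $d$th roots of unity $\omega_j$ and $r\to1$. Then $(1-r^2)^2G_Z\to I$ and $(1-r^2)^2H_{z^d,Z}\to vv^*$ with $v=(\overline{\omega_1},\dots,\overline{\omega_n})^T$ and $\Vert v\Vert^2=n$. This covers every $n\le d$. For $n>d$ one takes a full $d$-point fiber, adds further points, and applies the same limit to the principal $d\times d$ block.
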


\begin{proof}
We first consider a finite Blaschke product $B$ of degree $d$. Let
\[
K_{B}=H^{2}\ominus BH^{2}
\]
be its model space. Then $\dim K_{B}=d$, and the reproducing kernel
of $K_{B}$ is 
\[
q^{B}_{z}(w)=\frac{1-\overline{B(z)}B(w)}{1-\overline{z}w}.
\]

The useful observation is a direct factorization of the Bergman kernel.
For $z,w\in\mathbb{D}$, 
\[
\left\langle k_{z},k_{w}\right\rangle =\left\langle q^{B}_{z},q^{B}_{w}\right\rangle ^{2}\left\langle k_{B(z)},k_{B(w)}\right\rangle .
\]
Indeed, the factors involving $B$ cancel, leaving 
\[
\frac{1}{\left(1-z\overline{w}\right)^{2}}.
\]
It follows that there is an isometry 
\begin{equation}
\Gamma\colon A^{2}\longrightarrow K_{B}\otimes K_{B}\otimes A^{2}\label{eq:2-5}
\end{equation}
such that 
\begin{equation}
\Gamma k_{z}=q^{B}_{z}\otimes q^{B}_{z}\otimes k_{B(z)}.\label{eq:2-6}
\end{equation}
One can see this directly on finite linear combinations of reproducing
kernels. The identity of Gram matrices shows that the rule in \prettyref{eq:2-6}
preserves norms, and the span of the Bergman kernels is dense in $A^{2}$.

We now look for the derivative of $B$ inside the first two tensor
factors. Let $C_{B}$ be the canonical conjugation on $K_{B}$, given
on the boundary by 
\[
\left(C_{B}g\right)(\zeta)=B(\zeta)\overline{\zeta g(\zeta)},\qquad\left|\zeta\right|=1.
\]
It is an antilinear isometric involution on $K_{B}$. Its action on
the reproducing kernels is 
\begin{equation}
\left(C_{B}q^{B}_{z}\right)(w)=\frac{B(w)-B(z)}{w-z},\label{eq:2-7}
\end{equation}
where the value at $w=z$ is obtained by removing the singularity.
In particular, 
\[
\left(C_{B}q^{B}_{z}\right)(z)=B'(z).
\]

For the fixed points $z_{1},\ldots,z_{n}$, let 
\[
R_{Z}=span\left\{ q^{B}_{z_{1}},\ldots,q^{B}_{z_{n}}\right\} \subseteq K_{B}
\]
and write 
\[
r=\dim R_{Z}.
\]
Since $R_{Z}$ is spanned by $n$ vectors and lies in the $d$-dimensional
space $K_{B}$, 
\begin{equation}
r\leq\min\left\{ n,d\right\} .\label{eq:2-8}
\end{equation}

Choose an orthonormal basis $e_{1},\ldots,e_{d}$ of $K_{B}$ such
that $e_{1},\ldots,e_{r}$ is an orthonormal basis of $R_{Z}$, and
set 
\begin{equation}
\Omega_{B}=\sum^{d}_{j=1}e_{j}\otimes C_{B}e_{j}.\label{eq:2-9}
\end{equation}
The vectors $C_{B}e_{1},\ldots,C_{B}e_{d}$ are again orthonormal,
so 
\[
\left\Vert \Omega_{B}\right\Vert ^{2}=d.
\]

For $x,y\in K_{B}$, 
\[
\left\langle \Omega_{B},x\otimes y\right\rangle =\left\langle C_{B}x,y\right\rangle .
\]
Taking $x=y=q^{B}_{z}$ and using \prettyref{eq:2-7} gives 
\begin{equation}
\left\langle \Omega_{B},q^{B}_{z}\otimes q^{B}_{z}\right\rangle =\overline{B'(z)}.\label{eq:2-10}
\end{equation}
So the derivative is obtained by contracting the first two tensor
factors in \prettyref{eq:2-6} against $\Omega_{B}$.

Normalize this contraction by defining $L\colon K_{B}\otimes K_{B}\otimes A^{2}\longrightarrow A^{2}$
on elementary tensors by 
\[
L\left(x\otimes y\otimes h\right)=\frac{1}{\sqrt{d}}\left\langle \Omega_{B},x\otimes y\right\rangle h.
\]
We only need $L$ on $R_{Z}\otimes K_{B}\otimes A^{2}$. On the first
two factors, the representing vector for this restricted functional
is 
\[
\frac{1}{\sqrt{d}}\sum^{r}_{j=1}e_{j}\otimes C_{B}e_{j}.
\]
Its squared norm is $r/d$. Hence 
\begin{equation}
\left\Vert L\big|_{R_{Z}\otimes K_{B}\otimes A^{2}}\right\Vert ^{2}=\frac{r}{d}.\label{eq:2-11}
\end{equation}

Now take 
\[
u=\sum^{n}_{i=1}c_{i}k_{z_{i}}.
\]
By \prettyref{eq:2-6}, 
\[
\Gamma u=\sum^{n}_{i=1}c_{i}q^{B}_{z_{i}}\otimes q^{B}_{z_{i}}\otimes k_{B(z_{i})},
\]
so $\Gamma u$ belongs to $R_{Z}\otimes K_{B}\otimes A^{2}$. Since
$\Gamma$ is an isometry, \prettyref{eq:2-11} gives 
\begin{equation}
\left\Vert L\Gamma u\right\Vert ^{2}\leq\frac{r}{d}\left\Vert u\right\Vert ^{2}.\label{eq:2-12}
\end{equation}
On the other hand, \prettyref{eq:2-10} gives 
\[
L\Gamma u=\frac{1}{\sqrt{d}}\sum^{n}_{i=1}c_{i}\overline{B'\left(z_{i}\right)}k_{B(z_{i})}.
\]
Therefore 
\[
d\left\Vert L\Gamma u\right\Vert ^{2}=\sum^{n}_{i,j=1}\overline{c_{i}}c_{j}\frac{B'\left(z_{i}\right)\overline{B'\left(z_{j}\right)}}{\left(1-B\left(z_{i}\right)\overline{B\left(z_{j}\right)}\right)^{2}}=c^{*}H_{B,Z}c.
\]
Similarly, 
\[
\left\Vert u\right\Vert ^{2}=\sum^{n}_{i,j=1}\overline{c_{i}}c_{j}\frac{1}{\left(1-z_{i}\overline{z_{j}}\right)^{2}}=c^{*}G_{Z}c.
\]
Multiplying \prettyref{eq:2-12} by $d$ therefore gives 
\[
c^{*}H_{B,Z}c\leq rc^{*}G_{Z}c.
\]
This holds for every $c\in\mathbb{C}^{n}$, so in fact 
\[
H_{B,Z}\leq rG_{Z}.
\]
Together with \prettyref{eq:2-8}, this proves \prettyref{eq:2-4}.

We pass to an arbitrary holomorphic self-map $f\colon\mathbb{D}\longrightarrow\mathbb{D}$.
By the Schur algorithm, there are finite Blaschke products $B_{m}$
converging to $f$ locally uniformly. Cauchy's formula then gives
$B'_{m}\longrightarrow f'$ locally uniformly as well. For each $m$,
\[
H_{B_{m},Z}\leq\min\left\{ n,\deg B_{m}\right\} G_{Z}\leq nG_{Z}.
\]
For the fixed finite set $Z$, the matrices $H_{B_{m},Z}$ converge
entry by entry to $H_{f,Z}$. Passing to the limit gives \prettyref{eq:2-3}.

It remains to see that the constant in \prettyref{eq:2-4} cannot
be improved. Take $B(z)=z^{d}$. First suppose $n\leq d$. Choose
distinct $d$th roots of unity $\omega_{1},\ldots,\omega_{n}$ and
let $z_{j}=r\omega_{j}$, $0<r<1$. Then all the points lie in the
same fiber of $B$.

For $i=j$, 
\[
\left(G_{Z}\right)_{ii}=\frac{1}{\left(1-r^{2}\right)^{2}},
\]
and for $i\neq j$ the entries of $G_{Z}$ remain bounded as $r$
tends to $1$. Thus 
\begin{equation}
\left(1-r^{2}\right)^{2}G_{Z}\longrightarrow I.\label{eq:2-13}
\end{equation}

Since $B'\left(z_{j}\right)=dr^{d-1}\overline{\omega_{j}}$ and $B\left(z_{j}\right)=r^{d}$,
we have 
\[
H_{B,Z}=\frac{d^{2}r^{2d-2}}{\left(1-r^{2d}\right)^{2}}vv^{*},\qquad v=\begin{pmatrix}\overline{\omega_{1}}\\
\vdots\\
\overline{\omega_{n}}
\end{pmatrix}.
\]
Using $1-r^{2d}=\left(1-r^{2}\right)\left(1+r^{2}+\cdots+r^{2d-2}\right)$,
we get 
\begin{equation}
\left(1-r^{2}\right)^{2}H_{B,Z}\longrightarrow vv^{*}.\label{eq:2-14}
\end{equation}
The matrix $vv^{*}$ has one nonzero eigenvalue, namely $\left\Vert v\right\Vert ^{2}=n$.
Thus any constant $C$ for which 
\[
H_{B,Z}\leq CG_{Z}
\]
holds uniformly must satisfy $C\geq n$.

If $n\geq d$, take a full $d$-point fiber and, when $n>d$, add
any further points in the disk. The same argument applied to the principal
$d\times d$ block gives $C\geq d$. Hence the best universal constant
is at least $\min\left\{ n,d\right\} $, and the upper bound has already
been proved. 
\end{proof}

There is a useful way to compare the proof with the ordinary Schwarz-Pick
lemma. For one point, the finite-dimensional estimate above reduces
to Cauchy-Schwarz. Indeed, 
\[
B'(z)=\left\langle q^{B}_{z},C_{B}q^{B}_{z}\right\rangle ,
\]
and therefore 
\[
\left|B'(z)\right|\leq\left\Vert q^{B}_{z}\right\Vert ^{2}=\frac{1-\left|B(z)\right|^{2}}{1-\left|z\right|^{2}}.
\]
For several points, the same model space is still present. What changes
is that the kernels $q^{B}_{z_{1}},\ldots,q^{B}_{z_{n}}$ can occupy
an $r$-dimensional part of $K_{B}$. The normalized contraction sees
only that part, and its squared norm is $r/d$. Since $r\leq n$ and
$r\leq d$, the two numbers in $\min\left\{ n,d\right\} $ enter for
the same reason.

\section{The symmetrized polydisc}\label{sec:3}

Here it is convenient to use the opposite matrix convention for Hermitian
forms. This replaces $G_{Z}$ and $H_{f,Z}$ by their entrywise conjugates,
which does not change any Loewner-order inequality, so we keep the
same notation.

For $Z=\left(z_{1},\ldots,z_{n}\right)\in\mathbb{D}^{n}$, let 
\[
\pi_{n}\left(Z\right)=\left(\sigma_{1}\left(Z\right),\ldots,\sigma_{n}\left(Z\right)\right),
\]
where 
\[
\sigma_{k}\left(Z\right)=\sum_{1\leq i_{1}<\cdots<i_{k}\leq n}z_{i_{1}}\cdots z_{i_{k}}.
\]
The symmetrized polydisc is 
\[
\mathbb{G}_{n}=\pi_{n}\left(\mathbb{D}^{n}\right).
\]
The map $\pi_{n}$ forgets the ordering of the points and remembers
only the coefficients of the monic polynomial having $z_{1},\ldots,z_{n}$
as its roots.

Let $S_{n}$ denote the Szegő kernel of $\mathbb{G}_{n}$. Its pullback
under $\pi_{n}$ has the particularly simple form \cite{MR3043017}
\begin{equation}
S_{n}\left(\pi_{n}\left(Z\right),\pi_{n}\left(W\right)\right)=\prod^{n}_{i,j=1}\frac{1}{1-z_{i}\overline{w_{j}}}.\label{eq:3-1}
\end{equation}
We consider the Hermitian metric $g_{n}$ obtained from the logarithmic
Hessian of this kernel. In local coordinates $u=\left(u_{1},\ldots,u_{n}\right)$
on $\mathbb{G}_{n}$, its matrix is 
\begin{equation}
\left[g_{n}\left(u\right)\right]_{\alpha\beta}=\frac{\partial^{2}}{\partial u_{\beta}\partial\overline{u_{\alpha}}}\log S_{n}\left(u,u\right).\label{eq:3-2}
\end{equation}
The usual reproducing kernel argument shows that this is positive
definite. We will only need its pullback to root coordinates.

Every holomorphic self-map $f$ of the disk induces a holomorphic
self-map $F_{f}$ of $\mathbb{G}_{n}$ by 
\begin{equation}
F_{f}\left(\pi_{n}\left(z_{1},\ldots,z_{n}\right)\right)=\pi_{n}\left(f\left(z_{1}\right),\ldots,f\left(z_{n}\right)\right).\label{eq:3-3}
\end{equation}
This map is well defined and holomorphic \cite{MR2135687}. The next
calculation identifies the two matrices from \prettyref{thm:2-1}.
\begin{prop}
\label{prop:3-1} Let $Z=\left(z_{1},\ldots,z_{n}\right)\in\mathbb{D}^{n}$.
The matrix of the pulled-back metric $\pi^{*}_{n}g_{n}$ in the coordinates
$z_{1},\ldots,z_{n}$ is $G_{Z}$. If $f$ is a holomorphic self-map
of $\mathbb{D}$, then the matrix of $\pi^{*}_{n}\left(F^{*}_{f}g_{n}\right)$
is $H_{f,Z}$. 
\end{prop}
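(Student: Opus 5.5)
The plan is to compute directly in root coordinates, using that pullback commutes with the $\partial\bar\partial$ operation. Since $\pi_n$ is holomorphic, the pullback of the Hermitian form $\partial\bar\partial\log S_n(u,u)$ is $\partial\bar\partial$ of the pulled-back potential. That potential is given by \eqref{eq:3-1}:
\[
\Phi(Z)=\log S_n\bigl(\pi_n(Z),\pi_n(Z)\bigr)=-\sum_{i,j=1}^{n}\log\bigl(1-z_i\overline{z_j}\bigr).
\]
This identity also holds near points where $\pi_n$ fails to be a local biholomorphism (coincident roots), because it is an equality of real-analytic functions on $\mathbb{D}^n$. So the pulled-back metric has matrix entries $\partial^2\Phi/\partial z_\beta\partial\overline{z_\alpha}$, and the conjugation convention fixed at the start of \prettyref{sec:3} tells us which index is conjugated.

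The key computation is the mixed second derivative. The term $-\log(1-z_i\overline{z_j})$ depends holomorphically only on $z_i$ and antiholomorphically only on $\overline{z_j}$. Hence $\partial_{z_\beta}\partial_{\overline{z_\alpha}}$ kills every term except the one with $i=\beta$, $j=\alpha$. In particular the diagonal terms $i=j$ contribute only when $\alpha=\beta$, and this is consistent with the same formula. For that surviving term,
\[
\frac{\partial^{2}}{\partial z_\beta\,\partial\overline{z_\alpha}}\Bigl(-\log(1-z_\beta\overline{z_\alpha})\Bigr)=\frac{1}{(1-z_\beta\overline{z_\alpha})^{2}}.
\]
Checking this: the first derivative in $z_\beta$ is $\overline{z_\alpha}/(1-z_\beta\overline{z_\alpha})$, and differentiating in $\overline{z_\alpha}$ gives $1/(1-z_\beta\overline{z_\alpha})^2$. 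This is exactly the $(\beta,\alpha)$ entry of $G_Z$, up to the entrywise conjugation already accounted for. This proves the first claim.

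For the second claim, use \eqref{eq:3-3}. It gives $F_f\circ\pi_n=\pi_n\circ f^{\times n}$, where $f^{\times n}(z_1,\dots,z_n)=(f(z_1),\dots,f(z_n))$. Therefore
\[
\pi_n^*F_f^*g_n=(f^{\times n})^*\pi_n^*g_n .
\]
By the first part, $\pi_n^*g_n$ has matrix $G_W$ at $W=f^{\times n}(Z)$. The map $f^{\times n}$ is diagonal with Jacobian $D=\mathrm{diag}(f'(z_1),\dots,f'(z_n))$, so the pulled-back matrix is $D^*G_WD$ or $DG_WD^*$, depending on the convention. Its entries are
\[
\frac{\overline{f'(z_\alpha)}\,f'(z_\beta)}{\bigl(1-f(z_\beta)\overline{f(z_\alpha)}\bigr)^{2}},
\]
which is $H_{f,Z}$ in the stated convention. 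Alternatively, one can redo the $\partial\bar\partial$ computation on $-\sum\log(1-f(z_i)\overline{f(z_j)})$ directly, using the chain rule for holomorphic $f$.

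The only delicate point is bookkeeping. First, the pullback of a metric through the possibly non-immersive map $\pi_n$ must be understood as a pulled-back Hermitian form; it is only positive semidefinite where $\pi_n$ degenerates, which is harmless. Second, the index and conjugation conventions must line up with \eqref{eq:2-1}–\eqref{eq:2-2}. I will state the convention explicitly and verify one entry.
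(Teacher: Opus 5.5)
Your proposal is correct and follows the paper's proof essentially step for step. You pull back the potential via \eqref{eq:3-1}, note that only the $(i,j)=(\beta,\alpha)$ term survives $\partial_{z_\beta}\partial_{\overline{z_\alpha}}$, and get the second claim from $F_f\circ\pi_n=\pi_n\circ f^{\times n}$ and the diagonal Jacobian of $f^{\times n}$; your added remarks on pulling back $\partial\bar\partial$ through the non-immersive map $\pi_n$ and on index conventions only make explicit what the paper leaves implicit.
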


\begin{proof}
Setting $W=Z$ in \prettyref{eq:3-1} gives 
\[
\log S_{n}\left(\pi_{n}\left(Z\right),\pi_{n}\left(Z\right)\right)=-\sum^{n}_{i,j=1}\log\left(1-z_{i}\overline{z_{j}}\right).
\]
Therefore 
\[
\frac{\partial^{2}}{\partial z_{j}\partial\overline{z_{i}}}\log S_{n}\left(\pi_{n}\left(Z\right),\pi_{n}\left(Z\right)\right)=\frac{1}{\left(1-\overline{z_{i}}z_{j}\right)^{2}}.
\]
This is the matrix $G_{Z}$.

For the second assertion, \prettyref{eq:3-3} gives 
\[
F_{f}\circ\pi_{n}=\pi_{n}\circ f^{\times n},
\]
where 
\[
f^{\times n}\left(z_{1},\ldots,z_{n}\right)=\left(f\left(z_{1}\right),\ldots,f\left(z_{n}\right)\right).
\]
Hence 
\[
\pi^{*}_{n}\left(F^{*}_{f}g_{n}\right)=\left(f^{\times n}\right)^{*}\left(\pi^{*}_{n}g_{n}\right).
\]
Using the first part, the matrix of the right side is 
\[
\left[\frac{\overline{f'\left(z_{i}\right)}f'\left(z_{j}\right)}{\left(1-\overline{f\left(z_{i}\right)}f\left(z_{j}\right)\right)^{2}}\right]^{n}_{i,j=1},
\]
which is $H_{f,Z}$. 
\end{proof}

The finite-point inequality now becomes a Schwarz-Pick estimate for
the metric $g_{n}$.
\begin{cor}
\label{cor:3-2} Let $f$ be a holomorphic self-map of $\mathbb{D}$,
and let $F_{f}$ be the induced self-map of $\mathbb{G}_{n}$. Then
\begin{equation}
F^{*}_{f}g_{n}\leq ng_{n}.\label{eq:3-4}
\end{equation}
If $B$ is a finite Blaschke product of degree $d$, then 
\begin{equation}
F^{*}_{B}g_{n}\leq\min\left\{ n,d\right\} g_{n}.\label{eq:3-5}
\end{equation}
The constants are sharp. 
\end{cor}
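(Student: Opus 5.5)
The plan is to pull everything back to root coordinates with \prettyref{prop:3-1} and then apply \prettyref{thm:2-1}. Fix a point $u\in\mathbb{G}_{n}$ and write $u=\pi_{n}(Z)$. The inequality $F^{*}_{f}g_{n}\leq ng_{n}$ at $u$ is a comparison of two Hermitian forms on the tangent space $T_{u}\mathbb{G}_{n}$. Suppose first that $z_{1},\ldots,z_{n}$ are distinct. Then the Jacobian of $\pi_{n}$ at $Z$ is a Vandermonde-type matrix, which is invertible, so $d\pi_{n}(Z)$ is a linear isomorphism onto $T_{u}\mathbb{G}_{n}$. The Loewner inequality between two forms is preserved under pullback by a linear isomorphism, and is reflected by it as well. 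Hence at such $u$ the claim is equivalent to $\pi^{*}_{n}F^{*}_{f}g_{n}\leq n\,\pi^{*}_{n}g_{n}$ at $Z$. By \prettyref{prop:3-1} this reads $H_{f,Z}\leq nG_{Z}$, up to the entrywise conjugation convention, which does not affect Loewner order. That is \prettyref{eq:2-3}. The same argument with \prettyref{eq:2-4} gives \prettyref{eq:3-5}.

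Next I would extend to all of $\mathbb{G}_{n}$. The points $\pi_{n}(Z)$ with distinct $z_{i}$ are exactly those off the image of the diagonals, that is, off the discriminant locus, and this set is dense and open in $\mathbb{G}_{n}$. Both $g_{n}$ and $F^{*}_{f}g_{n}$ are continuous Hermitian-form-valued functions on $\mathbb{G}_{n}$, since $g_{n}$ is real-analytic and $F_{f}$ is holomorphic. The set where $ng_{n}-F^{*}_{f}g_{n}\geq0$ is therefore closed, and it contains a dense set, so it is all of $\mathbb{G}_{n}$. This continuity step is the point I would check most carefully: one must use the intrinsic coordinates $u$ on $\mathbb{G}_{n}$ rather than root coordinates, because $d\pi_{n}$ degenerates on the diagonal and the comparison cannot be transferred there directly.

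For sharpness I would reuse the extremal configurations from the proof of \prettyref{thm:2-1}. Take $B(z)=z^{d}$ and $z_{j}=r\omega_{j}$ with distinct $d$th roots of unity, completing to $n$ points if $n>d$. These points are distinct, so $d\pi_{n}$ is an isomorphism at $Z$. Any constant $C$ with $F^{*}_{B}g_{n}\leq Cg_{n}$ would then give $H_{B,Z}\leq CG_{Z}$. The limits \prettyref{eq:2-13} and \prettyref{eq:2-14} already force $C\geq\min\{n,d\}$.

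For the general bound \prettyref{eq:3-4}, the constant $n$ is attained in the limit by $B(z)=z^{n}$, which is itself a holomorphic self-map of $\mathbb{D}$. For this map, taking $d=n$ gives $C\geq n$, so no smaller uniform constant can work.
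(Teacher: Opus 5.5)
Your proposal is correct and follows the paper's own proof. At points with distinct coordinates $\pi_{n}$ is a local biholomorphism, so \prettyref{prop:3-1} reduces the claim to \prettyref{thm:2-1}; the inequality then extends across the discriminant locus by density and continuity, and sharpness comes from the same $z^{d}$ root-of-unity configurations. Your extra details (the Vandermonde Jacobian, closedness of the positive semidefinite cone, and the explicit choice $B(z)=z^{n}$ for sharpness of \eqref{eq:3-4}) only spell out what the paper leaves implicit.
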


\begin{proof}
Suppose first that the coordinates of $Z$ are distinct. Then $\pi_{n}$
is locally biholomorphic at $Z$. By \prettyref{prop:3-1}, the pullbacks
of $g_{n}$ and $F^{*}_{f}g_{n}$ have matrices $G_{Z}$ and $H_{f,Z}$.
The two assertions therefore follow directly from \prettyref{thm:2-1}.
Points with distinct coordinates are dense, so the inequalities extend
by continuity across the collision set.

The sharpness examples in the proof of \prettyref{thm:2-1} use distinct
points. Since $\pi_{n}$ is locally biholomorphic at those points,
the same examples show that the constants in \prettyref{eq:3-4} and
\prettyref{eq:3-5} cannot be decreased. 
\end{proof}

The finite Blaschke case has an intrinsic interpretation on $\mathbb{G}_{n}$.
Edigarian and Zwonek proved that every proper holomorphic self-map
of $\mathbb{G}_{n}$ is $F_{B}$ for some finite Blaschke product
$B$ \cite{MR2135687}. The degree of $B$ can also be read from the
degree of the induced map.

Suppose that $B$ has degree $d$. Choose a generic point 
\[
u=\pi_{n}\left(w_{1},\ldots,w_{n}\right)
\]
with distinct coordinates $w_{1},\ldots,w_{n}$, none of which is
a critical value of $B$. Each $w_{j}$ has $d$ preimages under $B$.
A preimage of $u$ under $F_{B}$ is obtained by choosing one preimage
of each $w_{j}$ and then forgetting the order. Since the fibers over
the distinct points $w_{j}$ are disjoint, this gives $d^{n}$ different
preimages. Consequently, 
\begin{equation}
\deg F_{B}=d^{n}.\label{eq:3-6}
\end{equation}

Combining this observation with \prettyref{cor:3-2} gives a form
of the estimate that refers only to the geometry of the symmetrized
polydisc.
\begin{thm}
\label{thm:3-3} Let $F:\mathbb{G}_{n}\longrightarrow\mathbb{G}_{n}$
be a proper holomorphic map. Then 
\begin{equation}
F^{*}g_{n}\leq\min\left\{ n,\left(\deg F\right)^{1/n}\right\} g_{n}.\label{eq:3-7}
\end{equation}
The constant is sharp. 
\end{thm}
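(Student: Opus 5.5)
The plan is to reduce \prettyref{thm:3-3} to \prettyref{cor:3-2} through the Edigarian--Zwonek classification \cite{MR2135687}. Let $F\colon\mathbb{G}_{n}\longrightarrow\mathbb{G}_{n}$ be proper holomorphic. By that classification there is a finite Blaschke product $B$ with $F=F_{B}$. Let $d=\deg B$.

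The first step is to identify the degree of $F$. By \prettyref{eq:3-6}, $\deg F=d^{n}$, so $(\deg F)^{1/n}=d$. For this to be legitimate, $\deg F$ should mean the generic number of preimages, i.e.\ the multiplicity of the proper map. The counting argument preceding \prettyref{eq:3-6} gives exactly this, because generic points $u=\pi_{n}(w_{1},\ldots,w_{n})$ with distinct, non-critical-value coordinates form a dense open set. We should also check that $d\geq1$: a constant $B$ (degree $0$) would not give a proper map, so proper $F$ forces $d\geq1$, and unimodular constants are excluded automatically. Then \prettyref{eq:3-5} gives
\[
F^{*}g_{n}=F_{B}^{*}g_{n}\leq\min\{n,d\}\,g_{n}=\min\bigl\{n,(\deg F)^{1/n}\bigr\}\,g_{n}.
\]

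For sharpness, interpret the statement as follows: for each admissible value of $\deg F$, namely $d^{n}$ with $d\geq1$, the constant cannot be lowered. Take $F=F_{B}$ with $B(z)=z^{d}$. Then $\deg F=d^{n}$, and the sharpness part of \prettyref{cor:3-2} shows that no constant smaller than $\min\{n,d\}$ works. That argument uses the distinct points $z_{j}=r\omega_{j}$ from the proof of \prettyref{thm:2-1}, at which $\pi_{n}$ is locally biholomorphic. Since $\min\{n,d\}=\min\{n,(\deg F)^{1/n}\}$, this gives sharpness.

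The main point needing care is the degree identification. One must make sure the notion of $\deg F$ for a proper self-map of $\mathbb{G}_{n}$ agrees with the generic fiber count used in \prettyref{eq:3-6}. One must also check that the $d^{n}$ unordered preimages are genuinely distinct points of $\mathbb{G}_{n}$. This holds because the fibers $B^{-1}(w_{j})$ are pairwise disjoint, so different choices of preimages give different unordered $n$-tuples. The $n$ chosen preimages are themselves distinct, so $\pi_{n}$ is a local biholomorphism there and each preimage is counted with multiplicity one. Everything else is a direct substitution into \prettyref{cor:3-2}.
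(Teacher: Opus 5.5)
Your proposal is correct and follows the paper's proof essentially verbatim. You use the Edigarian--Zwonek classification to write $F=F_{B}$, the fiber count $\deg F=d^{n}$ to get $(\deg F)^{1/n}=d$, substitute into the Blaschke case of Corollary~\ref{cor:3-2}, and take $B(z)=z^{d}$ for sharpness. Your extra checks are sound refinements of details the paper leaves implicit: that $d\geq1$, and that the $d^{n}$ preimages are distinct and each counted with multiplicity one. For the multiplicity-one claim, note that you also need $B'\neq0$ at the chosen preimages, which follows from the $w_{j}$ not being critical values.
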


\begin{proof}
By the classification of proper self-maps of the symmetrized polydisc
\cite{MR2135687}, there is a finite Blaschke product $B$ of some
degree $d$ such that $F=F_{B}$. By \prettyref{eq:3-6}, 
\[
d=\left(\deg F\right)^{1/n}.
\]
The estimate now follows from \prettyref{eq:3-5}.

For every $d\geq1$, the map induced by $B\left(z\right)=z^{d}$ has
degree $d^{n}$. The sharpness part of \prettyref{cor:3-2} shows
that the coefficient $\min\left\{ n,d\right\} $ cannot be replaced
by a smaller universal coefficient for proper maps of this degree. 
\end{proof}

In particular, if $F$ is an automorphism of $\mathbb{G}_{n}$, then
\prettyref{thm:3-3} gives 
\[
F^{*}g_{n}\leq g_{n}.
\]
Applying the same inequality to $F^{-1}$ gives the reverse inequality.
Hence every automorphism of $\mathbb{G}_{n}$ is an isometry of $g_{n}$.

\bibliographystyle{amsalpha}
\bibliography{ref}

\end{document}